\tikzset{every state/.style={minimum size=0pt}}
\def\andd{\wedge}
\def\Zee{\mathbb{Z}}
\def\suchthat{\, : \,}
\newcommand{\seqnum}[1]{\href{https://oeis.org/#1}{\rm \underline{#1}}}
\DeclareMathOperator{\ce}{ce}
\begin{document}

\author{Nicolas Ollinger\\
Universit\'e d'Orl\'eans\\
LIFO - b\^at.\ 3IA\\
6 rue L\'eonard de Vinci\\
BP 6759\\
45067 Orl\'eans Cedex 2\\
France\\
\href{mailto: nicolas.ollinger@univ-orleans.fr}{\tt nicolas.ollinger@univ-orleans.fr}\\
\and Jeffrey Shallit\footnote{Supported in part by a grant from NSERC, 2018--04118.}\\
School of Computer Science\\
University of Waterloo\\
200 University Ave. W.\\
Waterloo, ON  N2L 3G1\\
Canada\\
\href{mailto:shallit@uwaterloo.ca}{\tt shallit@uwaterloo.ca}}

\title{The Repetition Threshold for Rote Sequences}

\theoremstyle{plain}
\newtheorem{theorem}{Theorem}
\newtheorem{corollary}[theorem]{Corollary}
\newtheorem{lemma}[theorem]{Lemma}
\newtheorem{proposition}[theorem]{Proposition}

\theoremstyle{definition}
\newtheorem{definition}[theorem]{Definition}
\newtheorem{example}[theorem]{Example}
\newtheorem{conjecture}[theorem]{Conjecture}
\newtheorem{problem}[theorem]{Problem}

\theoremstyle{remark}
\newtheorem{remark}[theorem]{Remark}

\maketitle

\begin{abstract}
We consider Rote words, which are infinite binary words with factor complexity $2n$.  We prove that the repetition threshold for this class is $5/2$.
Our technique is purely computational, using the {\tt Walnut} theorem prover and a new technique for generating automata from morphisms due to the first author and his co-authors.
\end{abstract}

\section{Introduction}

We consider infinite sequences (words) $\bf x$ over a finite alphabet.   A classical metric for such sequences is their {\it factor complexity} (aka subword complexity, aka complexity) $\rho_{\bf x} (n)$, which counts the number of distinct factors (i.e., contiguous blocks) occurring in $\bf x$.

Rote sequences, introduced by Rote \cite{Rote:1994}, form a particularly interesting class; they are the words of factor complexity
$2n$ for $n \geq 1$.   Since such a sequence has exactly two factors of length $1$, Rote sequences
must be defined over a binary alphabet, which we take to be $\Sigma_2 = \{0,1\}$.
Some examples of Rote sequences studied in the literature include the
{\it Rote-Fibonacci} sequence \cite{Du&Mousavi&Rowland&Schaeffer&Shallit:2017} and
complementation-symmetric Rote sequences \cite{Dvorakova&Medkova&Pelantova:2020}.

Another metric of interest involves the degree of repetition in a word.   We say a finite
word $z = z[0..n-1]$ has a {\it period\/} $p\leq n$
if $z[i]=z[i+p]$ for $0 \leq i < n-p$.
For example, the French word {\tt entente} has
periods $3$, $6$, $7$.   The {\it exponent\/} $\exp(z)$ of
a finite word $z$ is its length divided by its
shortest period.   Thus, {\tt entente} has
exponent $7/3$.  The {\it critical exponent\/} $\ce({\bf x})$ of an infinite word $\bf x$ is the supremum, over all finite factors $z$, of  $\exp(z)$.  Such a critical exponent can be attained, or not attained.  Given a set $S$ of infinite words, its
{\it repetition threshold\/} is the infimum, over
all ${\bf x} \in S$, of $\ce({\bf x})$.  

Repetition thresholds have been determined for many large classes of words.   For example, in Dejean's conjecture \cite{Dejean:1972}, the set
$S$ is taken to be $\Sigma_k^*$, where
$\Sigma_k = \{ 0,1,\ldots, k-1 \}$.
Dejean's conjecture was finally proven
by Currie and Rampersad \cite{Currie&Rampersad:2011} and
\cite{Rao:2011}, independently.   Other interesting classes that have been studied include Sturmian words \cite{Carpi&deLuca:2000}, 
balanced words 
\cite{Rampersad&Shallit&Vandomme:2019}, and many others.

In this note we determine the repetition threshold for Rote sequences.  Along the way we construct a certain infinite binary word $\bf q$ and examine some of its properties.

\section{Lower bound}

Let us extend the notion of Rote word to finite words:  we say a finite word $z$ of length $n$ is Rote
if $\rho_z(i) \leq 2i$ for $1 \leq i \leq n$.

\begin{proposition}
Every Rote word of length $> 38$ (and hence every infinite Rote word) has a finite factor of exponent $\geq 5/2$.
\end{proposition}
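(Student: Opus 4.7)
The plan is a finite tree search: show by exhaustive backtracking that the set of binary words that are simultaneously Rote (in the finite sense just defined) and have critical exponent $< 5/2$ is finite, with longest element of length at most $38$.

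First I would reduce the infinite case to the finite one. Observe that the Rote property is inherited by prefixes: if $z' $ is a prefix of $z$, then every factor of $z'$ is a factor of $z$, so $\rho_{z'}(i) \leq \rho_z(i) \leq 2i$. Likewise, if an infinite word $\bf x$ had critical exponent strictly less than $5/2$, every finite prefix would also avoid factors of exponent $\geq 5/2$. Hence any infinite Rote word avoiding exponent $\geq 5/2$ would yield arbitrarily long finite Rote words with the same property, contradicting the finite statement.

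Next I would perform a depth-first search over $\Sigma_2^*$, extending the current candidate $z$ by $0$ or $1$ at each step, and pruning a branch as soon as either (i) the newly created suffix introduces a factor of exponent $\geq 5/2$ (only factors ending at the last position can be new, and it suffices to check suffixes $w$ of $z$ with $|w| \geq 5p/2$ where $p$ ranges over potential short periods), or (ii) the updated count of distinct factors of some length $i \leq |z|$ exceeds $2i$. Both conditions are cheaply checkable incrementally, and since the Rote condition already forces linear complexity, the search space is tightly constrained. Running this procedure, I expect every branch to die out at depth at most $38$, with at least one surviving word of length exactly $38$ to certify tightness.

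The main obstacle is really bookkeeping rather than mathematical depth. One must be careful that the exponent test is applied at every length that can first introduce a $5/2$-power (i.e., at every new length-$\lceil 5p/2 \rceil$ suffix for each candidate period $p \geq 1$), and that the factor-complexity test is maintained for all $i$, not just the current length. A second small concern is to argue cleanly that the search is truly exhaustive: this follows because the two pruning rules are monotone in the sense that any Rote word of critical exponent $<5/2$ of length $n$ arises as a leaf (or internal node) of the tree, its prefixes being Rote with the same exponent bound. Once the tree is verified to have depth $\leq 38$, the proposition follows immediately.
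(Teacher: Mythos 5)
Your proposal is correct and follows essentially the same route as the paper: an exhaustive breadth-first/depth-first search over the prefix-closed tree of finite Rote words avoiding exponent $\geq 5/2$, verifying that the tree is finite with longest words of length $38$. Your additional remarks on the infinite-to-finite reduction and the incremental pruning rules are sound elaborations of the same computation.
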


\begin{proof}
We perform breadth-first or depth-first search over the tree of all finite Rote words of critical exponent $< 5/2$.   This tree is finite and the longest words are of length $38$.  They are
$$00110011010011001001101001100100110010,
00110011010011001001101001100100110011,$$
and the six words formed by reversal and
complementation of these two.
\end{proof}

\section{The words $\bf p$ and $\bf q$}

It now remains to see that the bound of $5/2$ is optimal; that is, there exists an infinite binary word of critical exponent $5/2$ and factor complexity $2n$.

We describe this word as follows:  let
$h$ be the morphism defined by
$h(0) = 01$, $h(1) = 21$, $h(2) = 0$.  This morphism and its infinite fixed point
$${\bf p} = 012102101021012101021 \cdots$$ were studied, for example, in
\cite{Currie&Ochem&Rampersad&Shallit:2022}.
Next, define another morphism as follows:
$g(0) = 011$, $g(1) = 0$, and $g(2) = 01$.
The word we are interested in is
${\bf q} := g({\bf p})$.   
Our main result is therefore the following:
\begin{theorem}
  The infinite word ${\bf q}$
is a Rote word with the optimal critical exponent $5/2$.
\label{thm2}
\end{theorem}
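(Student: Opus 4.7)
The plan is to split Theorem~\ref{thm2} into the complexity claim $\rho_{\mathbf{q}}(n)=2n$ for $n\ge 1$ and the critical-exponent claim $\ce(\mathbf{q})=5/2$, and to settle both by mechanical verification. The lower bound $\ce(\mathbf{q})\ge 5/2$ will be immediate once the complexity claim is established, because that claim makes $\mathbf{q}$ an infinite Rote word and the proposition of Section~2 then applies; equivalently, one may exhibit and verify a concrete factor of $\mathbf{q}$ of exponent $5/2$ inside \texttt{Walnut}. Thus the genuine work is to show that no factor of $\mathbf{q}$ has exponent exceeding $5/2$, and that the number of distinct length-$n$ factors of $\mathbf{q}$ is exactly $2n$ for every $n\ge 1$.

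The preparatory, and most delicate, step is to produce an automatic description of $\mathbf{q}$ usable by \texttt{Walnut}. Because $h$ has image lengths $2,2,1$ and is therefore non-uniform, $\mathbf{p}$ (and hence $\mathbf{q}=g(\mathbf{p})$) is not automatic in any standard base-$k$ sense. I would instead apply the construction of the first author and co-authors alluded to in the abstract: from $h$ one extracts a Dumont--Thomas--style positional numeration system together with a DFAO that, on input $n$ read in this system, outputs $\mathbf{p}[n]$, as well as an automaton for the addition relation of that system. Composing the DFAO with $g$ and adjusting the index automaton to account for $g$'s block lengths $3,1,2$ then yields an automatic representation of $\mathbf{q}$ inside \texttt{Walnut}.

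Once this infrastructure is installed, the remaining items are routine queries. For the critical-exponent upper bound I would have \texttt{Walnut} evaluate the formula expressing \emph{there is no triple $(i,n,p)$ with $p\ge 1$ and $2n>5p$ such that $\mathbf{q}[i+j]=\mathbf{q}[i+j+p]$ for every $j<n-p$}; a true output yields $\ce(\mathbf{q})\le 5/2$. For the factor complexity I would write the \texttt{Walnut} predicate selecting, for each $n$, the least starting index of each length-$n$ factor of $\mathbf{q}$, use the enumeration/counting facility to obtain a linear representation of $\rho_{\mathbf{q}}(n)$, and then symbolically verify equality with $2n$; alternatively, one can prove the two inequalities $\rho_{\mathbf{q}}(n)\le 2n$ and $\rho_{\mathbf{q}}(n)\ge 2n$ by separate first-order queries expressing that every factor has at most, respectively at least, the appropriate number of extensions.

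The main obstacle is the first step: producing and validating the DFAO for $\mathbf{p}$ in the non-standard numeration associated with $h$, checking that the companion addition automaton is correct, and packaging everything so that the subsequent \texttt{Walnut} queries are meaningful. Once this infrastructure is certified, both the upper bound on the critical exponent and the factor-complexity equality are decidable statements, and their verification rests on \texttt{Walnut}'s decision procedure for the first-order theory of the resulting automatic sequence.
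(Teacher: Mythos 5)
Your proposal is correct and follows essentially the same route as the paper: build a Dumont--Thomas numeration system adapted to the morphism, obtain a small DFAO for $\mathbf{q}$ and an addition automaton, then discharge the critical-exponent bound and the $\rho_{\mathbf{q}}(n)=2n$ identity as {\tt Walnut} queries (the latter by comparing linear representations). The only place you are vaguer than the paper is the crucial ``compose with $g$'' step, which the paper handles carefully by constructing an inflated word $\mathbf{p}'$ and a new addressing automaton $\mathcal{N}_{\mathbf{q}}$ so that the resulting numeration system is tuned to $\mathbf{q}$ itself — the naive P4 approach via $\mathbf{p}$ is noted to fail for state-size reasons — but you correctly flag this as the delicate point.
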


The presence of the factor $1001\,1001\,10$
shows that $\ce({\bf q}) \geq 5/2$.
Thus, it remains to prove the following two claims:
\begin{itemize}
\item[(1)] $\bf q$ contains no factors of exponent $>5/2$:  it is $(5/2)^+$-power-free;
\item[(2)] $\bf q$ has factor complexity $2n$.    
\end{itemize}

To prove both these claims, we use the the theorem prover {\tt Walnut}, \cite{Mousavi:2016,Shallit:2023}, a free software package that can rigorously prove or disprove many assertions about certain kinds of sequences.   It is discussed in the next section.

The word $\bf q$ was very recently studied by Dvo\v{r}\'akov\'a, 
Ochem, and Opo\v{c}ensk{\'a} \cite{Dvorakova&Ochem&Opocenska:2024};
they also proved, as we have,
that $\bf q$ is $(5/2)^+$-power-free,
using a detailed analysis of return words.   The reader may
enjoy comparing their proof with ours.  We thank Pascal Ochem for informing us about their work.

\section{Proving theorems with {\tt Walnut}}
Both claims (1) and (2) above are, in principle, decidable because we know that
$\bf p$ is an automatic word in the Pisot-$4$ numeration system described in
\cite{Currie&Ochem&Rampersad&Shallit:2022}.
In particular, there is a $72$-state automaton computing ${\bf p}[n]$ in the following sense: the input is $n$ expressed in the $P4$ numeration system, and the output associated with the last state reached is 
${\bf p}[n]$.   Therefore, by a classic result of Bruy\`ere et al.~\cite{Bruyere&Hansel&Michaux&Villemaire:1994}, we know that there is an algorithm to translate a first-order logical formula $\varphi$, using addition and indexing into ${\bf p}$, into an automaton accepting the representation of the natural number values making $\varphi$ true.   If there are no free variables, this gives us a way to rigorously prove or disprove claims merely by computing the corresponding automata.  

{\tt Walnut}, a free software system originally created by Hamoon Mousavi, \cite{Mousavi:2016}, and discussed in more detail in \cite{Shallit:2023}, implements this algorithm.
As an example, let us find a DFAO (deterministic finite automaton with output) computing $\bf q$.
We can get it from the DFAO for $\bf p$ because, as proved in \cite{Currie&Ochem&Rampersad&Shallit:2022},
the count of each letter appearing in prefixes of $\bf p$ is also computable by
an automaton, in a synchronized sense \cite{Shallit:2021h}:
there are automata $A_i$, for $i \in \{0,1,2\}$
that on input $n$ and $y$ in parallel, both
expressed in the $P4$ system, and accepts
if and only if $y = |{\bf p}[0..n-1]|_i$, the number of occurrences of $i$ in the prefix
of length $i$.  Here the automaton   $A_0$ has $43$ states, $A_1$ has $34$ states, and $A_2$ has $48$ states.

Thus we can compute ${\bf q}[n]$ with an automaton by determining the position
of $\bf p$ that gives rise to the symbol, using
these synchronized automata.   This gives an automaton of $72$ states for $\bf q$ in the $P4$ system, which can be computed with the following {\tt Walnut} code.
\begin{verbatim}
def imlen "?msd_pisot4 Ex,y,z $paut0(n,x) & 
   $paut1(n,y) & $paut2(n,z) & w=3*x+y+2*z":
#159 states
def se "?msd_pisot4 ~En $imlen(n,w)":
#72 states
combine Q se:
\end{verbatim}

As an application, let us compute
the abelian complexity of $\bf q$.   (Recall that the abelian complexity of a sequence is the number of distinct blocks of length $n$, up to rearrangement of symbols.)
Since $\bf q$ is defined over a binary alphabet, this amounts to determining how many different possibilities there are for the number of $0$'s in a length-$n$ factor of $\bf q$.   And this, in turn, 

Next, we count the number of $0$'s in a prefix of
$\bf q$ of length $n$, and an arbitrary factor of
length $n$ as follows:
\begin{verbatim}
def pref0 "?msd_pisot4 (n=0&p=0) | 
   (Ew,x $imlen(p-1,w) & $imlen(p,x) & n>w & n<=x)"::
# 182 states
def count0 "?msd_pisot4 Ew,x $pref0(i,w) & $pref0(i+n,x) & z+w=x":
#20474 states  
\end{verbatim}

Next, we determine an automaton that accepts $n$ and $x$ in parallel, if $x$ is the number of $0$'s in some factor of $\bf q$ of length $n$:
\begin{verbatim}
def range0 "?msd_pisot4 Ei $count0(i,n,x)":
# 728 states  
\end{verbatim}
We are now ready to prove the following result.
\begin{theorem}
The abelian complexity function of $\bf q$ takes the values $\{1,2,3,4\}$ and these are the only possibilities.  Furthermore, there is a DFAO of $341$ states computing it.
\end{theorem}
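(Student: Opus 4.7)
The starting point is the synchronized automaton \verb|range0|, which on input $(n,x)$ in parallel (both in the $P4$ representation) accepts iff some length-$n$ factor of $\bf q$ contains exactly $x$ zeros. Since $\bf q$ is binary, the abelian equivalence class of a factor is determined by its length and its number of zeros, so the abelian complexity $\rho^{ab}_{\bf q}(n)$ is precisely the cardinality of the section $\{x : \verb|range0|(n,x)\}$. Thus the problem reduces entirely to reasoning about \verb|range0| inside \texttt{Walnut}.

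My plan is to show, for each candidate value $k \in \{1,2,3,4\}$, that the predicate ``exactly $k$ distinct $x$ satisfy $\verb|range0|(n,x)$'' is expressible by a short first-order formula $\varphi_k(n)$ using only bounded existential and universal quantification over the $x$'s. Concretely, $\varphi_k(n)$ asserts the existence of $x_1 < x_2 < \cdots < x_k$ each satisfying $\verb|range0|(n,x_i)$, together with a universal clause saying that any $y$ with $\verb|range0|(n,y)$ equals one of the $x_i$. Because $k \le 4$ is fixed, each $\varphi_k$ has bounded quantifier alternation and \texttt{Walnut} can build the corresponding DFA directly from \verb|range0|.

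Once the four DFAs $D_1, D_2, D_3, D_4$ are constructed, I would verify with a single \verb|eval| command that
\[
\forall n \; \bigl( \varphi_1(n) \lor \varphi_2(n) \lor \varphi_3(n) \lor \varphi_4(n) \bigr)
\]
evaluates to \texttt{true}, and as a sanity check that the $D_i$ are pairwise disjoint. This establishes that the abelian complexity function takes values only in $\{1,2,3,4\}$. Finally, the \verb|combine| command assembles $D_1,\ldots,D_4$ into a single DFAO whose output on $n$ (in $P4$) is $\rho^{ab}_{\bf q}(n)$; after minimization this should yield the claimed $341$-state automaton. One also checks existence: each $\varphi_k$ is satisfied for some $n$, so all four values are actually attained.

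The main obstacle is state-complexity blowup. The automaton \verb|range0| already has $728$ states, and each $\varphi_k$ requires forming products of several copies of \verb|range0| together with comparisons $x_i < x_{i+1}$ in $P4$, then projecting out the $x_i$. The intermediate determinized automata can be large, but because $k$ is bounded by $4$ and \texttt{Walnut} minimizes after every operation, the computation is expected to terminate in reasonable time; the final minimization to $341$ states confirms that no higher abelian complexity value occurs.
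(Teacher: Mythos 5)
Your proposal is correct in substance and follows the same overall architecture as the paper (reduce abelian complexity to the set of zero-counts via \texttt{range0}, classify each $n$ by the size of that set, then \texttt{combine} into a DFAO), but the way you express ``abelian complexity equals $k$'' differs from the paper in a way worth comparing. You count distinct witnesses directly: $\exists x_1<\cdots<x_k$ satisfying \texttt{range0}, plus a universal clause that every attainable value is one of them. The paper instead works only with pairs: it defines predicates asserting that any two attainable values differ by at most $0,1,2,3$, takes set differences of consecutive ones to isolate each complexity value, and separately checks that no two attainable values differ by exactly $4$. That pairwise formulation is logically weaker on its face --- equating ``spread $\le k-1$'' with ``at most $k$ values'' (and the difference-by-$4$ check with ``never $5$ values'') silently uses the standard fact that the set of zero-counts of length-$n$ factors is a contiguous interval, since sliding a window by one position changes the count by at most one. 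Your formulation does not need that fact, which is a genuine (if small) gain in self-containedness; the price is computational: each $\varphi_k$ builds a product of up to five copies of the $728$-state \texttt{range0} (four existential variables plus the universally quantified $y$) before projection, whereas the paper's formulas never involve more than two auxiliary variables, keeping intermediate automata small. Given that state blowup is exactly the obstacle that forced the authors to abandon the $P4$ system for the main theorem, it is not guaranteed your construction runs to completion in practice, though it is certainly correct in principle. If you keep your version, you should either note explicitly that contiguity is not needed, or --- if the computation proves too heavy --- fall back on the pairwise trick and then state the contiguity lemma, which the paper leaves implicit.
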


\begin{proof}
First let us show the abelian complexity never takes the value $5$:
\begin{verbatim}
eval val5 "?msd_pisot4 ~Ex,y,n $range0(n,x) & $range0(n,y) & y=x+4":
\end{verbatim}
and {\tt Walnut} returns {\tt TRUE}.

Next, let us determine the automaton computing the abelian complexity function of $\bf q$:
\begin{verbatim}
def val1 "?msd_pisot4 Ax,y ($range0(n,x) & $range0(n,y)) => x=y"::
def val2 "?msd_pisot4 Ax,y ($range0(n,x) & $range0(n,y) & x>=y) => 
   (x=y|x=y+1)"::
def val3 "?msd_pisot4 Ax,y ($range0(n,x) & $range0(n,y) & x>=y) => 
   (x=y|x=y+1|x=y+2)"::
def val4 "?msd_pisot4 Ax,y ($range0(n,x) & $range0(n,y) & x>=y) => 
   (x=y|x=y+1|x=y+2|x=y+3)"::
def abel2 "?msd_pisot4 $val2(n) & ~$val1(n)"::
# 336 states
def abel3 "?msd_pisot4 $val3(n) & ~$val2(n)"::
# 340 states
def abel4 "?msd_pisot4 $val4(n) & ~$val3(n)"::
# 333 states
combine BEL val1=1 abel2=2 abel3=3 abel4=4::
# 341 states
\end{verbatim}
The automaton {\tt BEL} is the desired one computing the abelian complexity function of $\bf q$, and it has $341$ states, too large to display here.
\end{proof}

\section{The main result}

Once we have an automaton that computes
${\bf q}$, we can computationally prove conditions (1) and (2) as follows, at least in principle.    Let's start with condition (1).  It suffices to give a first-order logical statement asserting the nonexistence of factors of exponent $>5/2$.   We can do this as follows:
$$
\neg \exists i, n \ n\geq 1 \andd \forall t, u \ (t\geq i \andd t \leq i + 3n/2 \andd u=t+n) \implies {\bf q}[u]={\bf q}[v].$$
This statement, suitably translated into {\tt Walnut}, looks like
\begin{verbatim}
eval check52plus "?msd_pisot4 ~Ei,n n>=1 & 
   At,u (t>=i & 2*t<=2*i+3*n & u=t+n) => Q[t]=Q[u]":
\end{verbatim}

To handle condition (2), there are multiple possible approaches.   One is the following:  we compute a linear representation for the number of novel factors of length $n$ appearing in ${\bf q}$; these are factors ${\bf q}[i..i+n-1]$ that never appeared previously.   Then the number of novel factors is the factor complexity at $n$.  To do this, we use the following {\tt Walnut} code:
\begin{verbatim}
def factoreq "?msd_pisot4 Au,v (u>=i & u<i+n & u+j=v+i) => Q[u]=Q[v]":
def novel n "?msd_pisot4 n>=1 & Aj (j<n) => ~$factoreq(i,j,n)":
def twon n "?msd_pisot4 i<2*n":
\end{verbatim}
This would give us a linear representation for the factor complexity function $\rho_n({\bf q})$,
with the exception that it computes $0$ at $n=0$,
and a linear representation for $2n$.  We can then check whether these two linear representations compute the same function using
the ideas in \cite[p.~58]{Shallit:2023}.

However, our attempts to prove (1) and (2) using our $P4$ automaton failed because of the large number of states involved.
To get around this problem, we use the same approach, but with a different numeration system that is more ``tuned'' to the specific word $\bf q$.

Recently the first author and co-authors developed a new technique for creating addable numeration systems based on morphisms \cite{Carton&Couvreur&Delacourt&Ollinger:2024}.
Using this we can complete the computational proof of our main theorem, Theorem~\ref{thm2}.

\begin{proof}
The Dumont-Thomas numeration system associated with the morphism 
$${h(a) = ab, h(b) = cb, h(c) = a}$$ is given by the addressing automaton $\mathcal{N}_h$ in Fig.~\ref{fig:addh}. The language recognized by this DFA, ordered in radix order, is the abstract numeration system DT$_h$. We obtain a DFAO recognizing ${\bf q}$  by adding the output $x$ for each state $x$ of $\mathcal{N}_h$. By the Cayley-Hamilton theorem, all sequences $(|h^n(u)|)_{n\geq 0}$ obey the recurrence relation given by the characteristic polynomial $P(X) = X^3-2X^2+X-1$ of the incidence matrix $$M=\begin{pmatrix}1&1&0\\0&1&1\\1&0&0\end{pmatrix}$$ of the morphism $h$. It turns out $P$ is the irreducible monic polynomial of the Pisot number $$\theta = \frac{2 + \sqrt[3]{\frac{25+3\sqrt{69}}2} + \sqrt[3]{\frac{25-3\sqrt{69}}2}}3\approx 1.7548776662466916.$$ Without too much surprise, the number $\theta$ is the same Pisot number used to define the greedy numeration system $P4$. To compute the addition relation for DT$_h$, we construct the sequence automaton $\mathcal{A}_h$ given in Fig.~\ref{fig:seqh} by associating the sequence of lengths of the associated prefixes with each symbol on a transition of the addressing automaton as follows:
\begin{itemize}
    \item it is the null sequence $(0)$ for the symbol $0$ 
    \item for the symbol $1$ it is the sequence $(a_n) = (|h^n(a)|)_{n\geq 0}$ starting from state $a$ as $h(a)=ab$ and the sequence $(c_n) = (|h^n(c)|)_{n\geq 0}$ starting from state $b$ as $h(b)=cb$.
\end{itemize}
Given the recurrence relation ${u_{n+3} = 2u_{n+2} - u_{n+1} + u_n}$, it is sufficient to compute the first $3$ values of each sequence. We do so using the incidence matrix and the initial length 1 for each letter $a, b, c$.
\[
\begin{array}{cccc | cccc}
n & 0 & 1 & 2 & 3 & 4 & 5 & \cdots\\\hline
a_n & 1 & 2 & 4 & 7 & 12 & 21 & \cdots\\
b_n & 1 & 2 & 3 & 5 & 9 & 16 & \cdots\\
c_n & 1 & 1 & 2 & 4 & 7 & 12 & \cdots\\
\end{array}
\]
We recognize the sequence \seqnum{A005251} from the OEIS \cite{oeis} for both $(a_n)$ and $(c_n)$.

\begin{figure}[!ht]
\centering
\subcaptionbox{Addressing automaton $\mathcal{N}_h$\label{fig:addh}}[.45\textwidth]
{\begin{tikzpicture}[thick,shorten >=1pt,node distance=2cm,on grid,auto,initial text=] 
   \node[state,initial below,accepting] (q_a) {$a$}; 
   \node[state,accepting] (q_b) [right=of q_a] {$b$}; 
   \node[state,accepting] (q_c) [right=of q_b] {$c$}; 
    \path[->] 
    (q_a) edge [loop left] node {0} ()
          edge node[below] {1} (q_b)
    (q_b) edge [loop below] node  {1} ()
          edge node[below] {0} (q_c)
    (q_c) edge [bend right=45] node[above] {0} (q_a)
          ;
\end{tikzpicture}}
\subcaptionbox{Sequence automaton $\mathcal{A}_h$\label{fig:seqh}}[.45\textwidth]
{\begin{tikzpicture}[thick,shorten >=1pt,node distance=2cm,on grid,auto,initial text=] 
   \node[state,initial below,accepting] (q_a) {$a$}; 
   \node[state,accepting] (q_b) [right=of q_a] {$b$}; 
   \node[state,accepting] (q_c) [right=of q_b] {$c$}; 
    \path[->] 
    (q_a) edge [loop left] node {0:$(0)$} ()
          edge node[below] {1:$(a_n)$} (q_b)
    (q_b) edge [loop below] node  {1:$(c_n)$} ()
          edge node[below] {0:$(0)$} (q_c)
    (q_c) edge [bend right=45] node[above] {0:$(0)$} (q_a)
          ;
\end{tikzpicture}}
\caption{Addressing and sequence automata for ${h(a) = ab, h(b) = cb, h(c) = a}$}
\end{figure}

Combining techniques from Frougny and Solomyak \cite{frousol} to techniques from Bruyère and Hansel \cite{bruhan}, one can construct a DFA recognizing the addition relation $\{ (x,y,z) \suchthat x+y=z \}$. That is, the system DT$_h$ is addable, to use the terminology from \cite{Peltomaki&Salo}. For more details and a practical tool to compute this addition automaton, refer to \cite{Carton&Couvreur&Delacourt&Ollinger:2024}.

To construct a numeration system for ${\bf q} = g({\bf p})$, we first consider the length of images of $|g(a)| = |011| = 3$, $|g(b)| = |0| = 1$, and $|g(c)| = |01| = 2$ and use them as initial values to construct new sequences for an \emph{inflated} version ${\bf p}'$ of ${\bf p}$---its image by $g'(a)=a12$, $g'(b)=b$, $g'(c)=c3$. By construction ${\bf q} = g''({\bf p}')$ where $g''(a)= g''(b)= g''(c)=0$ and $g''(1)= g''(2)= g''(3)=1$.
\begin{align*}
{\bf p} &= abcbacbabacbabcbabacbabcbacbabcbabacbabcbacbabacbabcbacbabcbabacbabcba\cdots\\
{\bf p}' &= a12bc3ba12c3ba12ba12c3ba12bc3ba12ba12c3ba12bc3ba12c3ba12bc3ba12ba12\cdots
\end{align*}

\[
\begin{array}{cccc | cccc}
n & 0 & 1 & 2 & 3 & 4 & 5 & \cdots\\\hline
a'_n & 3 & 4 & 7 & 13 & 23 & 40 & \cdots\\
b'_n & 1 & 3 & 6 & 10 & 17 & 30 & \cdots\\
c'_n & 2 & 3 & 4 & 7 & 13 & 23 & \cdots\\
\end{array}
\]

We recognize the sequence \seqnum{A137495} from OEIS for both $(a'_n)$ and $(c'_n)$. Replacing the sequences $(a_n)$ and $(c_n)$ by $(a'_n)$ and $(c'_n)$ inside $\mathcal{A}_h$ provides a new sequence automaton for the positions with letters $a$, $b$ and $c$ in ${\bf p}'$. To obtain a numeration system for ${\bf p}'$, it is sufficient to provide new transitions to reach letters $1$, $2$ and $3$, which are all at bounded distance of a support letter (given by $g'$). To do so, we first replace the letter $1$ by the letter $3$ to preserve radix order, and we allow the last letter of a number representation to add an increment of $1$ or $2$ to its position and add a new state to represent the final symbol $1$ in ${\bf q}$. We then modify each transition pointing to $a$ or $c$ and replicate it via a transition to the new state. Doing so, we obtain the addressing automaton $\mathcal{N}_{\bf q}$ depicted in Fig.~\ref{fig:addq} that defines the new numeration system for ${\bf q}$. A DFAO for ${\bf q}$ is obtained by adding $g''(x)$ as output to each state $x$. Considering the new recurrence relation given by $X P(X) = X^4-2X^3+X^2-X$, we associate ultimately null sequences $\alpha_i = (i, 0, 0, 0, \ldots)$ with the increment transitions. This way, we obtain the sequence automaton $\mathcal{A}_{\bf q}$ depicted in Fig.~\ref{fig:seqq}, from which we compute the addition relation DFA for $\{ (x,y,z) \suchthat x+y=z \}$ using the same technique as before---it is still possible as the recurrence relation is ultimately Pisot.

\begin{figure}[!ht]
\centering
\subcaptionbox{Addressing automaton $\mathcal{N}_{\bf q}$\label{fig:addq}}[.45\textwidth]
{\begin{tikzpicture}[thick,shorten >=1pt,node distance=2cm,on grid,auto,initial text=] 
   \node[state,initial below,accepting] (q_a) {$a$}; 
   \node[state,accepting] (q_b) [right=of q_a] {$b$}; 
   \node[state,accepting] (q_c) [right=of q_b] {$c$};
   \node[state,accepting] (q_1) [below=of q_b] {$1$};
    \path[->] 
    (q_a) edge [loop left] node {0} ()
          edge node[below] {3} (q_b)
          edge [bend right=20] node[below left] {1,2} (q_1)
    (q_b) edge [loop above] node  {3} ()
          edge node[below] {0} (q_c)
          edge node {1} (q_1)
    (q_c) edge [bend right=90] node[above] {0} (q_a)
          edge [bend left=20] node[below right] {1,2} (q_1)
          ;
\end{tikzpicture}}
\subcaptionbox{Sequence automaton $\mathcal{A}_{\bf q}$\label{fig:seqq}}[.45\textwidth]
{\begin{tikzpicture}[thick,shorten >=1pt,node distance=2cm,on grid,auto,initial text=] 
   \node[state,initial below,accepting] (q_a) {$a$}; 
   \node[state,accepting] (q_b) [right=of q_a] {$b$}; 
   \node[state,accepting] (q_c) [right=of q_b] {$c$};
   \node[state,accepting] (q_1) [below=of q_b] {$1$};
    \path[->] 
    (q_a) edge [loop left] node {0:$(0)$} ()
          edge node[below] {3:$(a'_n)$} (q_b)
          edge [bend right=20] node[below left] {1:$\alpha_1$,2:$\alpha_2$} (q_1)
    (q_b) edge [loop above] node  {3:$(c'_n)$} ()
          edge node[below] {0:$(0)$} (q_c)
          edge node {1:$\alpha_1$} (q_1)
    (q_c) edge [bend right=90] node[above] {0:$(0)$} (q_a)
          edge [bend left=20] node[below right] {1:$\alpha_1$,2:$\alpha_2$} (q_1)
          ;
\end{tikzpicture}}
\caption{Addressing and sequence automata for ${\bf q}$}
\end{figure}

This gives a family of relatively small automata: the DFA for the numeration system and the DFAO for ${\bf q}$ both have 4 states, and the addition relation DFA has 143 states. With this new representation, manipulating ${\bf q}$ with Walnut is very fast and we conclude the desired results, as follows:  
\begin{verbatim}
eval check52plus "?msd_mor ~Ei,n n>=1 & At,u (t>=i & 2*t<=2*i+3*n & u=t+n) 
   => Mor[t]=Mor[u]":
def factoreq "?msd_mor Au,v (u>=i & u<i+n & u+j=v+i) => Mor[u]=Mor[v]":
# 125 states
def novel n "?msd_mor n>=1 & Aj (j<n) => ~$factoreq(i,j,n)":
# 120 states
def twon n "?msd_mor n>=1 & i<2*n":
# 70 states
\end{verbatim}
and we now check, using a Maple program,
that the linear representations computed by
{\tt novel} and {\tt twon} are identical.
Here {\tt Mor} is the automaton computing {\tt Q} in the new numeration system.
\end{proof}

The attentive reader might notice that the addressing automaton $\mathcal{N}_{\bf q}$ is prefix-closed, as all states are finite. Indeed, it is related (by exchanging $3$ and $2$ in the label from $b$ to $b$) to the Dumont-Thomas numeration system associated with the morphism $t(a)=a11b$, $t(1)=\varepsilon$, $t(b)=c1b$, $t(c)=a11$.

\section{Additional remarks}

\begin{remark}
We observe that $\bf q$ is uniformly recurrent.   Indeed, two factors of length $n\geq 1$ are separated by at most $7n$
positions.
\begin{verbatim}
def nextgap "?msd_mor Ej i<j & $factoreq(i,j,n) & i+g=j & 
   At (i<t & t<j) => ~$factoreq(i,t,n)":
# 275 states
def maxgap "?msd_mor Ei $nextgap(g,i,n) & 
   Ah (h>g) => ~Ei $nextgap(h,i,n)"::
# 38 states
eval uc "?msd_mor Ag,n (n>=1 & $maxgap(g,n)) => g<=7*n"::
\end{verbatim}
And {\tt Walnut} returns {\tt TRUE} for this last assertion.
\end{remark}

\begin{remark}
As it turns out, the factor $1001\,1001\,10$ mentioned previously is the {\it only\/} factor of exponent $5/2$ in $\bf q$.   We can prove this
with the following {\tt Walnut} commands:
\begin{verbatim}
def per "?msd_mor p>=1 & p<=n & $factoreq(i,i+p,n-p)":
# 719 states
def exp52 "?msd_mor Ep $per(i,n,p) & 2*n=5*p":
# 7 states
eval testlength "?msd_mor Ai,n (n>=1 & $exp52(i,n)) => 
   (n=10 & $factoreq(i,11,10))":
\end{verbatim}
This raises the question of what the {\it asymptotic critical exponent} of $\bf q$ is:   this is the supremum over all $e$ such that there are infinitely many distinct factors of exponent $\leq e$.   It seems that
this asymptotic critical exponent is the same as that for $\bf p$, but we did not try to prove this, although it seems likely it can be done using {\tt Walnut}.
\end{remark}

\begin{remark}
The sequence $\bf q$ contains no palindromes of length $>15$.   Indeed, it even contains no reversible factor of length $>15$, where ``reversible" means a factor whose reverse is also in $\bf q$.  This can be proven most simply by checking all $32$ factors of length $16$.
\end{remark}

\begin{remark}
The set of all finite binary Rote words of critical exponent $\leq 5/2$ seems to have a certain ``rigidity'' of structure, reminiscent of the famous Restivo-Salemi theorem for overlap-free words \cite{Restivo&Salemi:1985a}.  For example,
it seems like the number of such words of length $n$ is $\leq 16n$ for all
$n \geq 58$.  However, we leave this for another day.
\end{remark}

%  Other things to mention and/or think about
% --  q  has an infinite string attractor  

\end{document}